\newcommand{\leaveout}[1]{}
\renewcommand{\p@enumii}{}
\newcommand{\cE}{{\mathcal E}}
\newcommand{\cF}{{\mathcal F}}
\newcommand{\cI}{{\mathcal I}}
\newcommand{\cL}{{\mathcal L}}
\newcommand{\cS}{{\mathcal S}}
\newcommand{\cT}{{\mathcal T}}
\newcommand{\cW}{{\mathcal W}}
\newcommand{\bx}{{\mathbf x}}
\newcommand{\bz}{{\mathbf z}}
\newcommand{\bu}{{\mathbf u}}
\newcommand{\blo}{{\cL}}
\newcommand\R{{\mathbb R}}
\newcommand{\dA}{\mathbf A}
\newcommand{\dB}{\mathbf B}
\newcommand{\dP}{\mathbf P}
\newcommand{\dom}[1]{\mathrm{dom}\left(#1\right)}
\newcommand{\range}[1]{\mathrm{ran}\left(#1\right)}
\newcommand{\Ker}[1]{\ker\left(#1\right)}
\newcommand{\ipdp}[2]{\langle #1 , #2 \rangle}
\newcommand{\Ipdp}[2]{\left\langle #1 , #2 \right\rangle}
\newcommand{\set}[1]{\left\lbrace #1 \right\rbrace}
\newcommand{\bi}{\begin{itemize}}
\newcommand{\ei}{\end{itemize}}
\newcommand{\be}{\begin{enumerate}}
\newcommand{\ee}{\end{enumerate}}
\newcommand{\sbm}[1]{\left[\begin{smallmatrix}#1
\end{smallmatrix}\right]}
\newcommand{\bbm}[1]{\begin{bmatrix}#1\end{bmatrix}}
\newtheorem{lemma}{Lemma}%[section]
\newtheorem{theorem}[lemma]{Theorem}
\theoremstyle{definition}
\newtheorem{remark}[lemma]{Remark}
\begin{document}

\title{Region-free explicit model predictive control for
	linear systems on Hilbert spaces}

\author{
Mikael~Kurula$^1$, % <-this % stops a space
Jukka-Pekka Humaloja$^{2,*}$ and % <-this % stops a space
Stevan~Dubljevic$^2$% <-this % stops a space
\thanks{$^1$M. Kurula is with \AA bo Akademi University, \AA bo, Finland. e-mail: mkurula@abo.fi} 
% <-this % stops a space
\thanks{$^2$S. Dubljevic and J.-P. Humaloja are with University of Alberta, Edmonton, Canada. 
e-mail: stevan.dubljevic@ualberta.ca, jphumaloja@ualberta.ca}
\thanks{$^*$J.-P. Humaloja is funded by grants from the Jenny and Antti Wihuri Foundation 
and the Vilho, Yrj\"o and Kalle V\"ais\"al\"a Foundation}
\thanks{Manuscript received X; revised Y.}}

\maketitle

\begin{abstract}
We extend discrete-time explicit model predictive control (MPC) rigorously to linear distributed parameter systems. After formulating an MPC framework and giving a relevant KKT theorem, we realize fast regionless explicit MPC by using the dual active set method QPKWIK. A Timoshenko beam with input and state constraints is used to demonstrate the efficacy of the design at controlling a continuous-time hyperbolic PDE with constraints, using a discrete-time explicit MPC controller. 
\end{abstract}

\begin{IEEEkeywords}
	Distributed parameter systems, Predictive control for linear systems, Optimal control
\end{IEEEkeywords}

\section{Introduction}

\emph{Model predictive control} (MPC for short, also called \emph{receding horizon control}) is a technique for approximately solving an 
infinite-horizon optimal control problem by instead solving a sequence of finite-horizon problems. This gives a near-optimal control signal implicitly, formed by the initial parts of the solutions of a sequence of optimization problems. In \cite{BMDP02} and some other papers of the same time, such as \cite{JPS02} and \cite{SdDG00}, it was realized that these optimization problems can sometimes be solved explicitly, foreseeing major speedups compared to traditional MPC using naive online optimization. These seminal papers sparked intensive research on what is now called \emph{explicit model 
predictive control (explicit MPC)}. Explicit MPC classically consists of two phases, the \emph{off-line phase}, where the piecewise affine optimal control step is precalculated and stored in advance via the process of \emph{state space exploration} (see the next paragraph), and the \emph{on-line} phase, where at every time step, \emph{point location} is performed in order to 
determine and retrieve the optimal control step. 

Automatic state space exploration, where the state space of the plant is divided into polygonal regions, on which the optimal control signal is piecewise affine, is tricky already in the finite-dimensional setting, due to degeneracy issues; see \cite{SKJTJ06}. Algorithms that work around degeneracy have been proposed, e.g., in \cite{PaSa10,ODP17}. The complexity of the explicit control increases rapidly with both the state-space dimension of the plant and the prediction horizon, leading to challenges with storing the explicit control in a data structure that facilitates retrieving the optimal control \cite{ZhXi18}. Kvasnica and coauthors have studied complexity reduction in explicit MPC \cite{KJPHKB19}.

Later, a much lighter explicit MPC technique appeared in \cite{GBN11}, the so called \emph{region-free} approach; see also \cite[\S2.2]{KJPHKB19} and the references therein. The region-free approach avoids state exploration and storage of the full piecewise affine controller, hence eliminating the off-line phase altogether. Instead, in the on-line phase, it calculates the optimal control step on the fly, using the set of constraints active at the optimal control step associated to the present plant state. Such methods are referred to as \emph{active set methods}, and prime examples are the dual active set method by Goldfarb and Idnani \cite{GoId83} and its refined version QPKWIK \cite{ScBi94}. Recent work of M\"onnigmann and coauthors \cite{Mon19,MiMoe20,JPM17} investigates the dynamics of the optimal active set and the explicit MPC controller, as functions of the horizon length. 

While \emph{explicit} MPC is introduced here as being new for infinite-dimensional systems, generic MPC theory for infinite-dimensional discrete-time (mainly nonlinear) plants has been developed, e.g., by Gr\"une and coauthors; see \cite{GrPaBook}. In his PhD thesis \cite{AltmullerThesis}, Altm\"uller demonstrates that some of the central ideas in \cite{GrPaBook} are effective also for hyperbolic PDEs. In this technical note, we concentrate on how to compute the MPC control explicitly and efficiently, for affinely constrained linear systems with quadratic cost, rather than on fundamental properties of MPC itself, such as stabilization or feasibility. 

We contribute a simple, fast, light and clean setup for \emph{explicit} MPC of distributed parameter systems, which avoids all degeneracy issues that clutter the present literature. The 
proposed methodology is applied to a Timoshenko beam model with input and state constraints, 
where we show how one can control the continuous-time plant with the discretely generated
control signal. While we focus on infinite-dimensional systems, we point out that the paper 
provides a powerful implementation for the finite-dimensional literature as well. A preliminary 
version of the proposed approach was presented without proofs in \cite{DHK22MTNS}.

MPC for infinite-dimensional continuous-time plants, usually without constraints, has been studied before; 
see in particular \cite{AK18}, but also, e.g., \cite{ItKu02,PGB14, AzKu19,KuPf20}. The 
continuous-time setting in these studies makes the analysis rather technical, and hoping to make 
an impact in engineering we will prefer a more practically oriented and elementary approach in 
discrete time, which allows us to handle a finite number of general affine constraints, similar to  \cite{XuDu17, DuHu20}.

The paper is laid out in the following way: In \S\ref{sec:MPC}, we describe a general and flexible MPC setup, which we further reformulate as a standard parametric Quadratic Program (pQP) in \S\ref{sec:pQP}. In \S\ref{sec:expsol}, we solve the pQP explicitly and make a connection to QPKWIK, thus completing the implementation of reliable explicit MPC for linear PDE systems. Finally, the paper is ended in \S\ref{sec:example}, where the setup is demonstrated on the Timoshenko beam.

\section{The discrete-time MPC formulation}\label{sec:MPC}

In this section, we describe a rather flexible MPC formulation: We allow time-dependent 
weights, 
time-dependent constraints, and a cost on the rate of change of control. We note that, similar to 
\cite[\S6]{BMDP02}, the 
basic formulation can easily be extended in several ways, in order to account, e.g., for reference tracking, 
measured disturbances, and soft constraints, but we omit those here for the sake of brevity.

Let $U$ and $X$ be \emph{real} Hilbert spaces and let $A\in\cL(X)$, $B\in\cL(U,X)$ be bounded linear operators. Consider the discrete-time plant dynamics
\begin{equation}\label{eq:discrPlant}
	x_{n+1}=Ax_n+Bu_n, \quad n\in\mathbb{N}_0,
\end{equation}
where $u_n\in U$ is the \emph{input} at the discrete time step $n$ and $x_n\in X$ is the \emph{state} at time $n$. The objective is to steer $x_n$ in \eqref{eq:discrPlant} to zero in such a way that a quadratic cost functional of the following form is minimized:
\begin{equation}\label{eq:infhoriz}
\begin{aligned}
	&\sum_{n=0}^\infty \bigg(\left\langle\bbm{Q & M \\ M^* & R}\bbm{x_n\\u_n}
	,\bbm{x_n\\u_n}\right\rangle \\
	&\qquad\quad+ \langle V (u_{n}-u_{n-1}),u_{n}-u_{n-1} \rangle\bigg),
\end{aligned}
\end{equation}
for some appropriate weights $Q,M,R,V$ and some arbitrarily fixed $u_{-1}\in U$, say $u_{-1}=0$, 
while satisfying some affine constraints on the input $u$ and the state $x$, which we denote 
by $\sbm{x_n\\u_{n-1}\\u_n}\in\cW$ for $n=0,1,\ldots$. In the optimal control problem 
\eqref{eq:discrPlant}--\eqref{eq:infhoriz}, the initial state $x_0$ is given, and the optimization 
variable is the control signal $\{u_n\}_{n=0}^\infty$. The optimization problem 
\eqref{eq:J}--\eqref{eq:Jconstr} below should be interpreted analogously.

One may try to solve the above optimal control problem approximately by approximating the cost \eqref{eq:infhoriz} by a finite sum. Choosing a \emph{horizon} $N$ and denoting $\bu':=(u'_{k})_{k=0}^{N-1}$, the control step $u_n$ at time $n$ is chosen as the first element $u_{*,0}$ of the minimizer
$\bu_*=(u_{*,k})_{k=0}^{N-1}$ of the cost functional 
\begin{equation}\label{eq:J}
\begin{aligned}
	 J(\bu',\sbm{x_n\\u_{n-1}})&:=\langle Px'_{N},x'_{N} \rangle + 
		\langle V_{N} u'_{N-1},u'_{N-1}\rangle \\
		&\quad +
	\sum_{k=0}^{N-1} \bigg(\left\langle\bbm{Q_k & M_k \\ M_k^* & R_k}\bbm{x_k'\\u_k'}
	,\bbm{x_k'\\u_k'}\right\rangle\\
	&\quad+\langle V_k (u'_{k}-u'_{k-1}),u'_{k}-u'_{k-1} \rangle\bigg),
\end{aligned}
\end{equation}
where the terminal penalty $P=P^*$ and the weights $Q_k=Q_k^*\in\cL(X)$, $R_k=R_k^*$, 
$V_k=V_k^* \in\cL(U)$ 
and $M_k \in\cL(U,X)$ satisfy $P,V_N,V_k, \sbm{Q_k & M_k \\ M_k^* & R_k}\geq0$ for all 
$k=0,\ldots,N-1$. The minimization of the cost functional \eqref{eq:J} is subject to 
constraints 
of the form
\begin{equation}\label{eq:Jconstr}
\begin{aligned}
	& \left\{\begin{aligned}
	x'_{k+1}&=A'x'_k+B'u'_k,\\
	x'_0&=x_n, \\
	u'_{-1} &= u_{n-1},
	\end{aligned}\right.\quad \bbm{x'_{N}\\u'_{N-1}}\in\cT,\\
	&\bbm{x'_k\\u'_k\\u'_{k-1}}\in\cW_k, \quad 0\leq k\leq N-1,
\end{aligned}
\end{equation}
where $\mathcal{W}_k$ and $\mathcal{T}$ represent some stage and terminal constraints 
introduced in \eqref{eq:onestepconstr}--\eqref{eq:onestepconstr2}, 
respectively, and ideally the \emph{prediction model}  is 
perfect, i.e., $(A',B')=(A,B)$.  The obtained control 
step $u_n$ is applied to the plant and at the next time step, a new optimization is carried out. 
We \emph{assume} that the MPC procedure is stabilizing and feasible, and refer to \cite[Sect. 
5--7]{GrPaBook} for an introduction to these topics.

By the above procedure, MPC solves a constrained quadratic optimization problem at every time step $n$ in order to compute the control action $u_n$. \emph{The point of explicit MPC} is to express the optimal input sequence $\bu'$ of \eqref{eq:J}--\eqref{eq:Jconstr} subject to the constraints \emph{explicitly} as a piecewise affine function of the parameter $\theta_n:=\sbm{x_n\\u_{n-1}}$, rather than as the implicit minimizer of the constrained optimization problem described above. 

In the sequel, we assume that the stage constraints in \eqref{eq:Jconstr} can be written in the affine form, for all $k=0,1,\ldots,N-1$, 
\begin{equation}\label{eq:onestepconstr}
\begin{aligned}
	& \bbm{x'_k\\u'_{k-1}\\u'_k}\in\cW_k\quad\iff \\
	& d_k-\cE_k x'_{k}-\cF_k u'_{k-1}-E_ku'_{k}\in [0,\infty)^{p_k},
\end{aligned}
\end{equation}
and that the terminal constraint can be written as
\begin{equation}\label{eq:onestepconstr2}
	\bbm{x'_N\\u'_{N-1}}\in \cT\quad\iff\quad \widehat d-\widehat Ex'_N-\widehat Fu'_{N-1}\in[0,\infty)^{\hat p},
\end{equation}
with $d_k\in[0,\infty)^{p_k}$, $\widehat d\in[0,\infty)^{\hat p}$, $\cE_k\in\blo(X;\R^{p_k})$, 
$\cF_k,E_k\in\blo(U;\R^{p_k})$, $\widehat F\in\blo(U;\R^{\hat p})$ and $\widehat 
E\in\blo(X;\R^{\hat p})$ all given as part of the control problem, where $\hat p$ and $p_k$ for 
all $k=0\ldots,N-1$ are nonnegative integers. Unlike our work here, in the literature, it is common 
to 
assume that the constraints on the state are decoupled from the constraints on the control. 
However, this is restrictive in practice, e.g., in the situation where one already has a low-level 
controller in closed loop with the plant that one wants to control by MPC, and one needs to avoid 
saturating the input of the low-level controller.

\begin{remark}
If there is no penalty on the rate of change of control, i.e., $V_k=0$ for all $k\in\mathbb{N}_0$ in 
\eqref{eq:infhoriz}, then one can take $\cF_k=0$ in \eqref{eq:onestepconstr} and $\widehat F=0$ in \eqref{eq:onestepconstr2}. Moreover, there may then be no need to keep track of $u'_{k-1}$, so that at time step $n$, the constraints reduce to $\sbm{x_k'\\u_k'}\in\cW_k$ for $k=0,1,\ldots, N-1$, $x_N\in\cT$, and the parameter reduces to $\theta_n=x_n$. In this case there is no need to specify a $u_{-1}$ when initializing the system at time $n=0$.
\end{remark}

\section{Writing the optimization step as a parametric quadratic program (pQP)}\label{sec:pQP}

Iterating the dynamics $x'_{k+1}=Ax'_k+Bu'_k$ of the prediction model and using $x'_0=x_n$, we get the solution formula
$$
	x'_{k}=A^{k}x_n+\sum_{j=0}^{k-1} A^jBu'_{k-1-j}, \quad k\geq0,
$$
and using this, we can derive bounded operators $\widetilde A$ and $\widetilde B$ with
$$
	\bx':=(x'_{k})_{k=1}^{N} = \widetilde A x_n+\widetilde B\bu'.
$$
Note the difference in the indexing: in the vector $\bx'$, the predicted states are at time steps $1\ldots N$ and the predicted control moves in $\bu'$ are at time steps $0\ldots N-1$.

With $\widetilde Q_P:=Q_1\oplus \cdots \oplus Q_{N-1}\oplus P\geq0$, $\widetilde 
R:=R_0\oplus\cdots\oplus 
R_{N-1}\geq0$, $\widetilde M_0:=\bbm{M_0&0&\ldots&0}$,
\begin{align*}
\widetilde M & := \begin{bmatrix}
0  & M_1 & 0 &  \ldots & 0\\
0 & 0 & M_2 & \ddots & 0 \\
\vdots & \vdots & \ddots & \ddots & \vdots \\
0 & 0 & 0 & \ddots & M_{N-1}  \\
0 & 0 & 0 & \ldots & 0 \end{bmatrix},
\qquad
\widetilde V_0 := \begin{bmatrix}
	-V_0 \\ 0 \\ \vdots \\ 0
\end{bmatrix}, \\
\widetilde V & := \bbm{
V_0+V_1  & -V_1 &0& \ldots &  0\\
-V_1 & V_1+V_2 & -V_2&\ldots & 0 \\
\vdots & \ddots & \ddots & \ddots&\vdots \\
0 & \ldots & \ddots & \ddots & -V_{N-1} \\
0 & \ldots & 0 & -V_{N-1} & V_{N-1}+V_N},
\end{align*}
and denoting $\theta_n:=\sbm{x_n\\u_{n-1}}$, the cost functional \eqref{eq:J} can be written as
\begin{equation}\label{eq:Jequal}
\begin{aligned}
	J(\bu',\theta_n) &= 
	\left\langle \bbm{Q_0+\widetilde A^*\widetilde Q_P\widetilde 
	A&0\\0&V_0}\theta_n,\theta_n\right\rangle\\
	&\qquad
	+\langle H\bu',\bu'\rangle+2\langle \bu',F\theta_n\rangle,
\end{aligned}
\end{equation}
\begin{equation}\label{eq:QPobj}
\begin{aligned}
	H &:= \widetilde B^*\widetilde Q_P\widetilde B+\widetilde R + \widetilde V + \widetilde B^*\widetilde M+\widetilde M^*\widetilde B
	\qquad\text{and}\\
	F &:= \bbm{\widetilde B^*\widetilde Q_P\widetilde A + \widetilde M^*\widetilde A+\widetilde M_0^*&\widetilde V_0}.
\end{aligned}
\end{equation}
Since we want to minimize $J(\bu',\theta_n)$ by varying $\bu'$, for a fixed, given 
$\theta_n=\sbm{x_n\\u_{n-1}}$, the first term in \eqref{eq:Jequal} does not influence the location of the minimum, and 
it can be omitted in the optimization. 

The particular case when $H$ is \emph{coercive} is important, i.e., when $H^*=H$ and
\begin{equation} \label{eq:Hgeqeps}
	\langle H\bu',\bu'\rangle \geq \varepsilon\|\bu'\|^2,
	\quad \bu' \in U^N,
\end{equation}
for some $\varepsilon > 0$ independent of $\bu'$. This is guaranteed, 
e.g., if $\widetilde V$ is coercive (see Lemma \ref{lem:tVpos} below); indeed observe that 
$$
	H-\widetilde V:=\widetilde B^*\widetilde Q_P\widetilde B+\widetilde R + \widetilde B^*\widetilde M+\widetilde 
M^*\widetilde B\geq0,
$$
since $\langle(H-\widetilde V)\bu',\bu'\rangle$ equals $J(\bu',0)$ in \eqref{eq:Jequal}, or 
equivalently in \eqref{eq:J}, with $V_k=0$ for all $k$, and we assumed 
$\sbm{Q_k&M_k\\M_k^*&R_k},P\geq0$. Moreover, $H$ inherits coercivity from $\widetilde 
R$ in case $M_k=0$ for all $k=1,\ldots,N-1$.

\begin{lemma}\label{lem:tVpos}
The operator $\widetilde V$ is positive semidefinite. If $V_k$ are coercive for all (possibly apart from one) $k \in 
\{0,1,\ldots,N\}$, then $\widetilde V$ is coercive.
\end{lemma}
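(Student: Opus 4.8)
The plan is to exhibit $\widetilde V$ as a congruence transform of a block-diagonal positive operator, using the telescoping structure from which it was built. First I would observe that $\widetilde V$ is exactly the Gram matrix of the rate-of-change penalty in \eqref{eq:J} specialized to $u'_{-1}=0$; that is, for every $\bu'=(u'_0,\ldots,u'_{N-1})\in U^N$,
\[
\langle \widetilde V \bu',\bu'\rangle
= \langle V_0 u'_0,u'_0\rangle
+ \sum_{k=1}^{N-1}\langle V_k(u'_k-u'_{k-1}),u'_k-u'_{k-1}\rangle
+ \langle V_N u'_{N-1},u'_{N-1}\rangle .
\]
Verifying this is a routine expansion: the diagonal blocks $V_k+V_{k+1}$ collect the two square contributions of $u'_k$ coming from consecutive terms, while the sub/super-diagonal blocks $-V_k$ reproduce the cross terms $-2\langle V_k u'_k,u'_{k-1}\rangle$ (using that $V_k=V_k^*$ and that $U$ is real). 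The only bookkeeping point is that the boundary blocks $V_0+V_1$ and $V_{N-1}+V_N$ come out correctly.

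Equivalently, and more usefully, I would record the factorization $\widetilde V = D^*\mathbf V D$, where $\mathbf V:=V_0\oplus\cdots\oplus V_N\in\cL(U^{N+1})$ and $D\in\cL(U^N;U^{N+1})$ is the bidiagonal difference operator $D\bu':=(u'_0,\,u'_1-u'_0,\,\ldots,\,u'_{N-1}-u'_{N-2},\,u'_{N-1})$. Since each $V_k=V_k^*\geq0$ we have $\mathbf V\geq0$, so $\langle\widetilde V\bu',\bu'\rangle=\langle\mathbf V D\bu',D\bu'\rangle\geq0$, which gives positive semidefiniteness at once.

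For coercivity I would reuse the same factorization. The first $N$ output components of $D$ form the block lower-triangular operator with identity diagonal and $-I$ subdiagonal; this is a bounded bijection of $U^N$ onto itself (its inverse is the partial-sum operator), hence bounded below, so $\|D\bu'\|^2\geq c\|\bu'\|^2$ for some $c>0$. If every $V_k$ is coercive then, as there are only finitely many indices, a common bound $\mathbf V\geq\varepsilon I$ holds with $\varepsilon:=\min_k\varepsilon_k>0$, and therefore
\[
\langle\widetilde V\bu',\bu'\rangle=\langle\mathbf V D\bu',D\bu'\rangle\geq\varepsilon\|D\bu'\|^2\geq\varepsilon c\,\|\bu'\|^2 ,
\]
establishing coercivity.

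I do not expect a genuine obstacle here; the semidefinite half is essentially immediate from the sum-of-squares form. The point requiring the most care is the coercivity transfer: one must note that $D$ is bounded \emph{below} (not merely injective), since it is this quantitative lower bound that carries a positive lower bound on $\mathbf V$ over to one on $\widetilde V=D^*\mathbf V D$. It is also worth remarking that coercivity in fact uses only $V_0,\ldots,V_{N-1}$, so the coercivity of $V_N$ assumed in the statement is not needed for that direction.
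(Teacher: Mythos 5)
Your proof is correct, and the positive-semidefiniteness half coincides with the paper's: both rest on the same sum-of-squares identity
\[
\langle \widetilde V\bu',\bu'\rangle=\langle V_0u_0',u_0'\rangle+\sum_{k=1}^{N-1}\langle V_k(u_k'-u_{k-1}'),u_k'-u_{k-1}'\rangle+\langle V_Nu_{N-1}',u_{N-1}'\rangle .
\]
For coercivity, however, you take a genuinely different route. The paper applies the reverse triangle inequality termwise, replacing each $\|u_k'-u_{k-1}'\|^2$ by $(\|u_k'\|-\|u_{k-1}'\|)^2$, which reduces the problem to the scalar $N\times N$ matrix $\widetilde\varepsilon$ (the analogue of $\widetilde V$ with $V_k=\varepsilon_k$); it then argues $\widetilde\varepsilon>0$ and invokes $\lambda_{\min}(\widetilde\varepsilon)$, using $\|\bu\|=\|\bu'\|$ to conclude. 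You instead factor $\widetilde V=D^*\mathbf{V}D$ with $D$ the block difference operator and observe that its first $N$ components form a block bidiagonal bijection of $U^N$ whose inverse is the partial-sum operator, so $D$ is bounded below; coercivity of $\widetilde V$ then follows by transferring $\mathbf{V}\geq\varepsilon I$ through the congruence. Your version avoids the reduction to a scalar auxiliary matrix and the eigenvalue argument, makes the quantitative constant explicit ($\varepsilon\,\|T^{-1}\|^{-2}$ with $T$ the truncated difference operator), and shows directly that $V_N$ need not be coercive --- which recovers, for the boundary index, the paper's subsequent remark that $\widetilde\varepsilon>0$ survives one $\varepsilon_k=0$. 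The paper's scalar-matrix argument is marginally more flexible on that last point, since $\lambda_{\min}(\widetilde\varepsilon)>0$ is immediate for an arbitrary single degenerate index, whereas your factorization handles an interior degenerate index only after noting that the remaining differences still determine $\bu'$ from both ends. Both arguments are complete; no gap.
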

\begin{proof}
For an arbitrary $\bu' \in U^N$, we have 
\begin{align*}
	\langle \widetilde V\bu',\bu' \rangle & = \langle V_0u_0',u_0' \rangle + \langle 
	V_Nu_{N-1}',u_{N-1}' \rangle\\
	&\qquad+ \sum_{k=1}^{N-1} \langle V_k(u_k'-u_{k-1}'),u_k'-u_{k-1}' 
	\rangle .
\end{align*}
The right-hand side is clearly non-negative as $\langle V_ku,u\rangle \geq 0$ for all $u\in U$ and $k \in \{0,1,\ldots,N\}$, see the paragraph after \eqref{eq:Jconstr}, and this implies that $\widetilde V$ is 
positive semidefinite. 

Let us now assume that every $V_k$ is coercive, possibly apart from one, i.e., that there exist $\varepsilon_k \geq 0$, all strictly positive, apart from at most one, such that $\langle V_ku,u\rangle  \geq \varepsilon_k\|u\|^2$  independently of $u\in U$ for all $k \in \{0,1,\ldots,N\}$. Using this and the reverse triangle inequality, we get 
\begin{equation} \label{eq:Vp1}
\begin{aligned}
	\langle \widetilde V\bu',\bu' \rangle &\geq \varepsilon_0\|u_0'\|^2 + \varepsilon_N\|u'_{N-1}\|^2 \\
	&\qquad+ 
\sum_{k=1}^{N-1}\varepsilon_k\,(\|u'_k\| - \|u'_{k-1}\|)^2 = \langle \widetilde \varepsilon \,\bu,\bu
\rangle_{{\R^N}},
\end{aligned}
\end{equation}
where we denote $\bu := (\|u'_k\|)_{k=0}^{N-1}{\in\R^N}$ and $\widetilde\varepsilon$ is defined as 
$\widetilde V$ {with $V_k$ replaced by $\varepsilon_k$}; hence $\widetilde\varepsilon$ is an 
$N\times N$ matrix, rather than an operator on $U^N$.

The right-hand side in \eqref{eq:Vp1} is non-negative, and it can only be zero if $\|u_k\|  = 0$ for all 
$k\in\{0,1,\ldots,N-1\}$, i.e., $\bu = 0$, and this implies that $\widetilde\varepsilon > 0$. 
Consequently, we have $\langle \widetilde\varepsilon\bu,\bu\rangle \geq 
\lambda_{\min}(\widetilde\varepsilon)\|\bu\|^2$, where $\lambda_{\min}(\widetilde\varepsilon) > 
0$ denotes the smallest eigenvalue of $\widetilde\varepsilon$. Finally, noting that $\|\bu\|^2 = 
\|\bu'\|^2$, we have that $\widetilde V$ is coercive:
$$
	\langle \widetilde V\bu',\bu'\rangle\geq
	\langle \widetilde\varepsilon\bu,\bu\rangle\geq
	\lambda_{\min}(\widetilde\varepsilon)\,\|\bu'\|^2.
$$
\end{proof}

Consequently, one can set $V_0 = 0$ to avoid having 
to specify $u_{-1}$ when initializing the system at time $n=0$, in case $u'_{k-1}$ does not appear in the stage constraints \eqref{eq:onestepconstr}. 

Completing the square in \eqref{eq:Jequal}, one gets that
\begin{equation}\label{eq:QPobjz}
	f(\bz):=\frac 12 \langle H\bz,\bz\rangle
\end{equation}
satisfies the following for all $\theta_n=\sbm{x_n\\u_{n-1}}$:
$$
	f(\bz)=\frac12 J(\bu',\theta_n)+g(\theta_n),\quad\text{as long as}\quad	
	\bz=\bu'+H^{-1}F\theta_n;
$$
moreover the inverse $H^{-1}$ is bounded with norm at most $1/\varepsilon$ because of \eqref{eq:Hgeqeps}. Hence, we will 
focus on minimizing $f(\bz)$.

Letting $\widetilde p<\infty$ denote the total number of constraints in  \eqref{eq:onestepconstr}--\eqref{eq:onestepconstr2}, one can in a similar way to the above derive a vector $W\in\R^{\widetilde p}$ and bounded linear operators $S\in\cL(X\times U,\R^{\widetilde p})$ (or $S\in\cL(X,\R^{\widetilde p})$) and $G\in\cL(U^N,\R^{\widetilde p})$, such that all these constraints can be 
written compactly~as 
\begin{equation}\label{eq:WSGintro}
	W+S\theta_n-G\bz\geq0,
\end{equation}
where the inequality is understood componentwise in $\R^{\widetilde p}$.

\section{Explicit solution of the pQP}\label{sec:expsol}

We will now solve explicitly the minimization of $f$ in \eqref{eq:QPobjz} subject to the constraints 
\eqref{eq:WSGintro}, i.e., the following pQP, which is to be solved by the MPC controller at 
every time step:
\begin{equation}\label{eq:mpQP}
\operatorname{argmin}_{\,\bz\in U^N} \frac 12 \langle H\bz,\bz\rangle
\quad\text{s.t.}\quad
W+S\theta_n-G\bz\geq0,
\end{equation}
where $W\in\R^{\widetilde p}$ and $H$, $S$, $G$ are all bounded operators. 

We start with a simple result on existence and uniqueness, which is contained in \cite[Prop. 
2.3.3, Thm 3.3.4]{ABMBook}. 

\begin{lemma}\label{lem:uniqueness}
Assume that the optimization problem \eqref{eq:mpQP} is \emph{feasible}, i.e., that the \emph{feasible set} $\cS(\theta_n)$, which consists of all $\bz$ satisfying the constraint in 
\eqref{eq:mpQP}, is nonempty. Moreover, assume that the 
operators $G,H$ are bounded and that $H$ is \emph{coercive}. Then 
\eqref{eq:mpQP} has a unique solution $\bz_ *$.
\end{lemma}

We next work towards a Hilbert-space extension of the KKT optimality conditions used in the explicit MPC literature. The first step is taken in the following lemma, whose second part is also contained in \cite[Thm 9.6.1]{ABMBook}.

\begin{lemma}\label{lem:KT}
Let $Z$ be a \emph{real} Hilbert space and let $f$ and $f_k$, $1\leq k\leq \widetilde p$, be a \emph{finite number} of convex functionals on $Z$. Moreover assume that $f$ and all $f_k$ are G\^ateaux differentiable on $Z$. Then statement 2) implies statement 1) below: 
\begin{enumerate}
\item The point $z_ *\in Z$ minimizes $f(z)$ subject to the constraints $f_k(z)\geq0$ for all $k=1,\ldots,\widetilde p$.

\item The point $z_*\in Z$ satisfies $f_k(z_*)\geq 0$ for all $k=1,\ldots,\widetilde p$, and there are $\lambda_k\geq0$, $k=1,\ldots,\widetilde p$, such that:
\begin{enumerate}
\item $\lambda_kf_k(z_ *)=0$ for all $k=1,\ldots,\widetilde p$, and 
\item the G\^ateaux derivatives of $f$ and $f_k$ satisfy
$$
	f'(z_ *)=\sum_{k=1}^{\widetilde p} \lambda_k f_k'(z_ *).
$$
\end{enumerate}
\end{enumerate}
Assume that the constraints satisfy the following \emph{Slater condition}: There exists a $z\in Z$, such that $f_k(z)>0$ for all $k=1,\ldots,\widetilde p$. Then the implication from 1) to 2) also holds.
\end{lemma}

\begin{proof}
First note that all Hilbert spaces are locally convex. The implication from 2) to 1) is then contained in statement (4) in \cite[Cor.\ 47.14]{Zei85}, after a change of signs on the constraints, whereas the converse implication follows from statements (1) and (3) in \cite[Thm 47.E]{Zei85}, assuming the Slater condition. 
\end{proof}

In case the optimization is carried out over a finite-dimensional space and the constraints are affine, it is well known that the Slater condition is not needed. It turns out that this is the case in infinite dimensions too, as long as the number of constraints is finite; this will be established as item 2) in Thm \ref{thm:affine} below.

For simplicity we introduce the notation $\dP:=\set{1,2,\ldots,\widetilde p}$. 
A vector $\lambda=(\lambda_k)_{k=1}^{\widetilde p}$ with the properties in Lemma \ref{lem:KT}.2) is referred to as a 
\emph{Lagrange multiplier associated to} $z_*$. Note that we do not claim that associated 
Lagrange multipliers are uniquely determined by $z_*$. The property $\lambda_kf_k(z_ *)=0$ for 
all $k$ is referred to as \emph{complementarity}. The set of constraints which are active at the optimizer $z_*$, i.e.,
$$
	\dA:=\set{k\in\dP \mid f_k(z_ *)=0},
$$ 
is referred to as the \emph{optimal active set}, and its complement is $\dA^{\!c}:=\dP\setminus\dA$. Denote 
$$
	\cI_k:=\bbm{0&\ldots&0&1&0&\ldots&0}\in\R^{\widetilde p}, 
$$
with the one in position $k\in\dP$. For any index set $\dA$ and operator $G$ mapping into 
$\R^{\widetilde p}$, we use $\#\dA$ to denote the number of elements in $\dA$, and $G^\dA$ 
means $G$ projected onto the components in $\R^{\widetilde p}$ which are indexed in $\dA$, so 
that $G^\dA=\Gamma G$, where $\Gamma=(\cI_k)_{k\in\dA}\in\R^{(\#\dA)\times \widetilde p}$, 
with $k$ in increasing order.

We next apply Lemma \ref{lem:KT} to \eqref{eq:mpQP}, for a fixed 
parameter value $\theta_n\in X\times U$, in order to get a Hilbert-space analogue of \cite[Thm 2]{BMDP02}. The resulting theorem is stronger than most similar results presently in the 
explicit MPC literature, because we obtain that the minimizer is an affine function of the 
parameter $\theta_n$, without assuming that the \emph{linear independence constraint 
qualification (LICQ)} holds at $\bz_*(\theta_n)$, i.e., we do not need that $G^\dA$ is surjective. In fact, if $H$ is invertible, then
\begin{align*}
	\range{G^\dA H^{-1}(G^\dA)^*} & = 
	\Ker{G^\dA H^{-1}(G^\dA)^*}^\perp \\ & =
	\Ker{(G^\dA)^*}^\perp=
	\range{G^\dA},
\end{align*}
so that the LICQ condition holds if and only if the matrix $G^\dA H^{-1}(G^\dA)^*$ is invertible. In the theorem, we more generally replace this inverse by the pseudoinverse $(G^\dA H^{-1}(G^\dA)^*)^{[-1]}$, where we denote
$$
	P^{[-1]}:=P\big|_{\Ker P^\perp}^{-1},\quad
	\dom{P^{[-1]}}:=\range P.
$$

\begin{theorem}\label{thm:affine} 
Let $\emptyset\neq\dA\subset\dP$ be a candidate active set, and let $\lambda:=(\lambda_k)_{k=1}^{\widetilde p}$ be a column vector of putative Lagrange multipliers. In the notation of \eqref{eq:mpQP}, the following are true:

\begin{enumerate}
\item Assume that $\dA$, $\theta_n\in X\times U$ (or $\theta_n\in X$), $\bz_ *\in U^{N}$ and 
$\lambda^\dA\in\R^{\#\dA}$ are such that 
\begin{equation}\label{eq:KKTsuff}
	\begin{split}
W^\dA+S^\dA \theta_n & =G^\dA\bz_ *,\\
W^{\dA^c}+S^{\dA^c}\theta_n&\geq G^{\dA^c}\bz_ *, \\
\lambda^\dA  \geq0,\quad 
H\bz_ *&=-(G^\dA)^*\lambda^\dA,
\end{split}
\end{equation}
where $\dA^c:=\dP\setminus\dA$. Then $\bz_*$ is the global minimizer of \eqref{eq:mpQP}, and the active set at $\bz_*$ contains $\dA$.

\item Conversely, if $\bz_ *$ minimizes \eqref{eq:mpQP} and $\dA$ is the set of all constraints active at $\bz_ *$, then \eqref{eq:KKTsuff} holds together with $W^{\dA^c}+S^{\dA^c}\theta_n>G^{\dA^c}\bz_ *$ and $\lambda^{\dA^c}=0$.

\item If $H$ is coercive and \eqref{eq:KKTsuff} holds, then the unique minimizer of \eqref{eq:mpQP} is 
\begin{equation}\label{eq:zsol}
\bz_ *=H^{-1}(G^\dA)^*\big(G^\dA H^{-1}(G^\dA)^*\big)^{[-1]}(W^\dA+S^\dA \theta_n)
\end{equation}
and there is some 
$k\in\Ker{(G^\dA)^*}$, such that
\begin{equation}\label{eq:lambdasol}
\lambda^\dA=-\big(G^\dA H^{-1}(G^\dA)^*\big)^{[-1]}(W^\dA+S^\dA \theta_n)\oplus k,
\end{equation}
\end{enumerate}
where $h\oplus k$ denotes $h+k$ with $h\perp k$.
\end{theorem}

We say that the point $\bz_*\in U^N$ is \emph{admissible} if $W+S\theta_n\geq G\bz_ *$, i.e., $\bz_*$ lies in the feasible set $\cS(\theta_n)$. We call a candidate active set $\dA$, for which there exists some $\lambda$ that satisfies \eqref{eq:KKTsuff}, a \emph{sufficient active set}, since it may be 
strictly smaller than the set of \emph{all} constraints active at the optimum $\mathbf z_*$, but it is nevertheless sufficient for guaranteeing optimality and computing the minimizer. 

If LICQ holds at $\theta_n$ then $k=0$ and the pseudoinverse equals the standard inverse in \eqref{eq:lambdasol} and \eqref{eq:zsol}. Theorem \ref{thm:affine} 
says nothing for $\dA=\emptyset$, but it is clear that the optimizer is $\mathbf z_*=0$ in this case, provided that it is admissible, and $\lambda=0$ works as associated Lagrange 
multiplier.

\begin{proof}[Proof of Theorem \ref{thm:affine}]
Clearly, the assumptions in item 1 (in item 2) imply admissibility of $\bz_*$, and hence feasibility of \eqref{eq:mpQP}. Now fix a parameter $\theta_n\in X\times U$, such that $\cS(\theta_n)\neq\emptyset$.

The functionals $f(\bz):=\frac12\Ipdp{H\bz}{\bz}_{U^N}$ and 
$$
	f_k(\bz,\theta_n):=\cI_k\,(W+S\theta_n-G\bz), \qquad1\leq k\leq \widetilde p,
$$
are all convex: For all $\bz,\bx\in U^{N}$ and $h\in[0,1]$, using \eqref{eq:Hgeqeps} at the end, possibly with $\varepsilon=0$ unless $H$ is coercive,
$$
\begin{aligned}
	(1-h)f(\bx)+hf(\bz)-f\big((1-h)\bx+h\bz\big) &= \\
	h\frac12\langle H\bz,\bz\rangle+(1-h)\frac12\langle H\bx,\bx\rangle&\\
		-\frac12\langle H(h\bz+(1-h)\bx),h\bz+(1-h)\bx\rangle &=\\
	h(1-h)\frac12\langle H(\bz-\bx),\bz-\bx \rangle&\\
	\geq \frac{\varepsilon\, h\, (1-h)}{2}\cdot\|\bz-\bx\|^2&,
\end{aligned}
$$
and $f_k(\cdot,\theta_n)$ is also convex:
$$	
	h\,f_k(\bz,\theta_n)+(1-h)\,f_k(\bx,\theta_n)=f_k(h\bz+(1-h)\bx,\theta_n).
$$
These are Fr\'echet (hence G\^ateaux) differentiable on $U^{N}$:
$$
	\big|f(\bz+\bx)-f(\bz)-\langle H\bz,\bx \rangle\big| = 
	\frac{|\langle H\bx,\bx\rangle|}{2}\leq
	\frac{\|H\|}{2}\cdot\|\bx\|^2,
$$
so that $f'(\bz)=H\bz$, $\bz\in U^{N}$, and
$$
	f_k(\bz+\bx,\theta_n)-f_k(\bz,\theta_n) +\langle G^*\cI_k^*,\bx\rangle=0,
$$
\begin{equation}\label{eq:fkder}
	\text{so that}\qquad f_k'(\bz, \theta_n)=- G^*\cI_k^*, \quad\bz\in U^{N};
\end{equation}
see for instance \cite[\S40.1]{Zei85}.

According to Lemma \ref{lem:KT}, $\bz_ *$ solves \eqref{eq:mpQP} if $\bz_*\in\cS(\theta_n)$ and there exist Lagrange multipliers $\lambda_k\geq0$ with $\lambda_kf_k(\bz_ 
*)=0$ for all $1\leq k\leq \widetilde p$, such that
\begin{equation}\label{eq:KTvari}
	f'(\bz_ *)=\sum_{k=1}^{\widetilde p} \lambda_kf_k'(\bz_ *).
\end{equation}
By substituting the Fr\'echet derivatives calculated above, we get that \eqref{eq:KTvari} holds if and only if $H\bz_ *+G^*\lambda=0$. If $\lambda_k=0$ for all $k\not\in\dA$, then  $H\bz_ *+G^*\lambda=0$ collapses into $H\bz_ *+(G^\dA)^*\lambda^\dA=0$. All constraints in $\dA$ are active at the 
optimum $\bz_ *$ if and only if $G^\dA\bz_ *=W^\dA+S^\dA \theta_n$. Hence, \eqref{eq:KTvari} holds, $\lambda^{\dA^c}=0$ and all constraints in $\dA$ are active if and only if
\begin{equation}\label{eq:KKTsys}
	\bbm{H&(G^\dA)^*\\G^\dA&0}\bbm{\bz_ *\\\lambda^\dA} =
	\bbm{0\\W^\dA+S^\dA \theta_n}
\end{equation}
and $\lambda^{\dA^c}=0$. Assuming additionally that $\lambda^\dA\in[0,\infty)^{\#\dA}$ and $W^{\dA^c}+S^{\dA^c}\theta_n\geq G^{\dA^c}\bz_ *$, we get from Lemma \ref{lem:KT} that $\bz_*$ minimizes \eqref{eq:mpQP}. Hence whenever \eqref{eq:KKTsuff} holds, we obtain that $\bz_*$ minimizes \eqref{eq:mpQP} by \emph{defining} $\lambda^{\dA^c}:=0$. Noting that some constraints $k\not\in\dA$ may also be active at $\bz_*$, we get that the active set at $\bz_*$ is $\dB_{\theta_n}\supset\dA$. The proof of item 1 is complete.

In order to prove item 2, we will apply \cite[Thm 2.4]{TaTr94}. For this, we equip all finite sets $\cF$ with the discrete topology, so that the $\sigma$-algebra of $\cF$ equals the power set of $\cF$. Thus all finite sets induce measure spaces where all subsets are measurable, and the standing assumptions on p. 5 of \cite{TaTr94} are satisfied in a rather trivial manner. By the first paragraph of \cite[\S3]{TaTr94}, \cite[Thm 2.4]{TaTr94} is applicable, since \eqref{eq:mpQP} has only a finite number of constraints, i.e., $\#\dP=\widetilde p<\infty$.

In order to make use of \cite[Thm 2.4]{TaTr94}, we next need to verify the constraint qualification in \cite[Def. 2.2]{TaTr94}: For every $\bz\in\cS(\theta_n)$, and for every nonzero $\bx\in U^N$ such that $\ipdp{\bx}{f_k'(\bz,\theta_n)}\geq0$ for all $k\in\dA_\bz$, there exist $\tau>0$ and a continuous arc $C:[0,\tau)\to U^N$, such that
$$
	C(0)=\bz,\quad C'(0)=\bx,\quad\text{and}\quad f_k\big(C(t),\theta_n\big)\geq0
$$
for all $t\in[0,\tau)$ and all $k\in\dP$; here $\dA_\bz$ denotes the set of constraints that are active at $\bz$:
$$
	\dA_\bz:=\set{k\in\dP\mid f_k(\bz,\theta_n)=0}.
$$
We fix some arbitrary pair $(\bz,\bx)$ with the above properties, and we will construct a feasible arc $C$ with the required initial conditions. Since our constraints are affine, we choose $C(t):=\bz+t\bx$, $t\in[0,\tau)$, as suggested on p.\ 11 of \cite{TaTr94}. Then
\begin{equation}\label{eq:arccond}
\begin{aligned}
	f_k\big(C(t),\theta_n\big) &= \cI_k\big(W+S\theta_n-G(\bz+t\bx)\big) \\
	&= f_k(\bz,\theta_n)+t\,\ipdp{\bx}{f'_k(\bz,\theta_n)}\geq0
\end{aligned}
\end{equation}
for all $t\geq0$ and $k\in\dB$ with
$$
	\dB:=\set{k\in\dP\mid \ipdp{\bx}{f'_k(\bz,\theta_n)}\geq0}. 
$$
Now let 
$$
	a:=\min_{k\in\dP\setminus\dB} f_k(\bz,\theta_n)>0
$$ 
(because $\dB\supset\dA_\bz$ by the choice of $\bz$ and $\bx$) 
and 
$$
	b:=\max_{k\in\dP\setminus\dB} -\ipdp{\bx}{f'_k(\bz,\theta_n)}>0
$$ 
(because of the definition of $\dB$). Then \eqref{eq:arccond} is satisfied also for all $k\in\dP\setminus\dB$ and for all $t\in[0,\tau)$, with $\tau:=a/b>0$. Thus the constraint qualification holds.

Let $\bz_*$ be a minimizer of \eqref{eq:mpQP} with the corresponding active set $\dA$. By \cite[Thm 2.4]{TaTr94}, the equivalent conditions in \cite[Prop.\ 2.2]{TaTr94} hold, i.e.,  there exists a finite measure $u^*$ on $\dP$, such that,
\begin{enumerate}
\item in a sense made precise in \cite{TaTr94},
\begin{equation}\label{eq:uint}
f'(\bz_*)=\int_\dP f'_k(\bz_*,\theta_n)\, du^*,
\end{equation}
\item $u^*(\dP')=0$ for all $\dP'\subset\dA^c$ and
\item $u^*(\dP')\geq0$ for all $\dP'\subset\dP$.
\end{enumerate}
Next define $\lambda_k:=u^*(\set{k})$ for $k\in\dP$. By properties 2 and 3 of $u^*$, $\lambda_k=0$ for all $k\in\dA^c$ and $\lambda_k\geq0$ for all $k\in\dA$. Since $\dP$ is finite, the integral \eqref{eq:uint} simplifies, using \eqref{eq:fkder} and the corresponding expression for $f'$, to
$$
	H\bz_*=\sum_{k\in\dP}
	-G^*\cI_k^*u^*(\set k)=-G^*\lambda
	=-(G^\dA)^*\lambda^\dA,
$$
because $\lambda^{\dA^c}=0$. This completes the proof of item 2.

Finally, for item 3, note that by \eqref{eq:KKTsuff}, the problem is feasible, and then Lemma \ref{lem:uniqueness} gives (the existence and) the uniqueness of the minimizer $\bz_*$. The conditions \eqref{eq:KKTsuff} in particular imply \eqref{eq:KKTsys}, and solving the latter for $\lambda^\dA$, we get \eqref{eq:lambdasol}, and then \eqref{eq:zsol} follows easily. 
\end{proof}

Due to Theorem \ref{thm:affine}, the dual active set method QPKWIK algorithm, the basis of the
\texttt{mpcActiveSetSolver} algorithm in MATLAB,  is applicable to solving the strictly convex quadratic programming problem 
\eqref{eq:mpQP}. This method is too involved to be 
reproduced here, but some of its nice properties are:

\begin{enumerate}
\item It is easy to find a dually feasible initial point for the iteration, namely the unconstrained 
optimum.
\item LICQ is in general not maintained throughout optimization, but one has more leeway in avoiding LICQ-related problems than is the case with primal active set methods.
\item It solves \eqref{eq:mpQP} or determine that no solution exists in a finite 
number of steps, since $\widetilde p<\infty$, and since each candidate active set is visited at 
most once.
\end{enumerate}
We close this section by observing that the mentioned algorithm in general operates only with 
\emph{sufficient} active sets rather than \emph{full optimal active set}.

\section{Example: Explicit MPC of a Timoshenko beam}\label{sec:example}

\subsection{Port-Hamiltonian formulation of Timoshenko beam}

As in \cite[Ex. 2.19]{VilPhd} (see also \cite[Ex. 7.1.4]{JacZwaBook}), we consider the 
Timoshenko 
beam model on the spatial interval $\xi\in[0,1]$. Let $w(\xi,t)$ and $\phi(\xi,t)$ denote the 
transverse displacement and the rotation angle of the beam, respectively, and let 
$\rho,I_{\rho},EI,K \in C^1(0,1;\mathbb{R}_+)$ denote mass per unit 
length, the rotary moment of inertia of the cross section, the product
of Young's modulus of elasticity and the moment of inertia of the
cross section, and the shear modulus, respectively. By defining a state variable $x =  
(x_1,x_2,x_3,x_4) := (w'-\phi, \rho \dot{w}, \phi', I_\rho \dot{\phi})$ where $()'$ and $\dot{()}$ 
denote spatial and temporal derivatives, respectively, the beam model can be written as a 
port-Hamiltonian system
\begin{equation}
	\dot{x}(t) = P_1\big(\mathcal{H}x(t)\big)' +
	P_0\mathcal{H}x(t), \quad x(0) = x_0,
	\label{eq:phs}
\end{equation}
where $\mathcal{H} =
\operatorname{diag}(K, 1/\rho, EI,
1/I_{\rho})$
and 
$$
P_1 = 
\begin{bmatrix}
	0 & 1 & 0 & 0 \\ 1 & 0 & 0 & 0 \\ 0 & 0 & 0 & 1 \\ 0 & 0 & 1 & 0 
\end{bmatrix}, \qquad P_0 = 
\begin{bmatrix}
	0 & 0 & 0 & -1 \\ 0 & 0 & 0 & 0 \\ 0 & 0 & 0 & 0 \\ 1 & 0 & 0 & 0
\end{bmatrix}.
$$

We consider a cantilever beam, where we control the free end.
That is,  assuming that the end at $\xi = 0$ is clamped, the boundary
conditions and controls of \eqref{eq:phs} are given by
\begin{subequations}
\begin{align}
	\frac{1}{\rho(0)}x_2(0, t) & = 
	\frac{1}{I_{\rho}(0)}x_4(0,t) \equiv 0 \\
	u_1(t) & = K(1)x_1(1,t) \\
	u_2(t) & = EI(1)x_3(1,t).
\end{align}
	\label{eq:bcs}%
\end{subequations}
By checking the conditions of  \cite[Thm 13.2.2]{JacZwaBook}, we have that the system 
\eqref{eq:phs}--\eqref{eq:bcs} is well-posed on $X = L^2(0,1;\mathbb{R}^4)$, i.e., it has a 
well-defined, unique weak solution for any inputs $u_1,u_2\in L^2_{loc}(0,\infty; \mathbb{R})$ and 
initial conditions $x_0\in X$.

\subsection{Cost functional  and conversion to discrete time}

Consider the cost functional
\begin{equation}
  \label{eq:ocp}
  \int\limits_0^{\infty} \left\langle Q'x(t),x(t)
  \right\rangle + \left\langle R'u(t), u(t) \right\rangle + \left\langle V'\dot u(t), \dot u(t) \right\rangle 
  dt,
\end{equation}
with weights $Q', R', V' \geq 0$ and with constraints 
$\int_0^1x_1(t) dt \leq \overline{x}_1$,
$\int_0^1x_4(t)dt \geq \underline{x}_4$, and 
$\underline{u} \leq u_{1,2}(t) \leq \overline{u}$
for all $t\geq 0$, where the bounds $ \overline{x}_1, \underline{x}_4, \underline{u},
\overline{u}$ will be specified in \S\ref{ex:sim}. Next, we will convert  the continuous-time problem 
\eqref{eq:ocp} into discrete-time and formulate the MPC procedure to find a control signal that 
approximately minimizes \eqref{eq:ocp}.

The system \eqref{eq:phs}--\eqref{eq:bcs} is transformed to discrete time using the Cayley 
transform \cite[(1.5)]{HaMa07}. For the boundary control system \eqref{eq:phs}--\eqref{eq:bcs}, 
the Cayley transform can be computed explicitly based on the Laplace transform of  
\eqref{eq:phs} and \cite[Rem. 10.1.5]{TuWeBook} similar to \cite[Sect. 4.1]{DuHu20}. However, 
the closed-form 
expressions of the discrete-time operators would be lengthy, so for practical computations 
and computational efficacy, we prefer computing the discrete-time operators based on 
finite-dimensional approximations of \eqref{eq:phs}--\eqref{eq:bcs}. We will specify the employed 
approximation methods in \S\ref{ex:sim}.

Converting the cost function \eqref{eq:ocp} into discrete time, the
coefficients $Q', R', V'$ should be scaled by the time discretization interval $h$ to account 
for the temporal integration (and to make the cost function independent of
$h$). However, we need to take into account that $u_k/\sqrt{h}$ approximates
$u(t)$ in the Cayley transform. Thus, the the discrete-time
cost function on some finite horizon $[0, N-1]$ is given by 
\begin{equation}
  \label{eq:docp}
  \begin{aligned}
  &\sum_{k=0}^{N-1} \Big(\left\langle Qx_k,x_k \right\rangle +
  \left\langle Ru_k,u_k \right\rangle \\
  &\qquad+ \left\langle
    V(u_k-u_{k-1}),u_k-u_{k-1} \right\rangle\Big),
\end{aligned}
\end{equation}
where $Q = hQ', R=R'$, and $V = h^{-2}V'$ (scaling by
$h^{-2}$ comes from the finite-difference approximation of
$\dot{u}(t)$). Compared to \eqref{eq:J}, we have taken $V_N=0$ as we see no reason to 
additionally penalize $u_{N-1}\neq 0$, albeit for longer prediction horizons this has very little 
impact on the solution. Moreover, even if we have not added any terminal ingredients to 
\eqref{eq:docp}, based on the simulation results of the next section, the MPC procedure is 
stabilizing. While stability of MPC is outside of our scope, we note that the turnpike property of 
\emph{unconstrained} quadratic problems of the form \eqref{eq:ocp} (with $V =0$) has been 
considered in \cite{GSS20}, which would provide one way of guaranteeing stability of MPC 
without terminal ingredients. However, these results cannot be (directly) applied to 
\emph{constrained} problems (with $V>0$), so it is unclear whether the constrained minimization 
problem of \eqref{eq:ocp} has the turnpike property or not.

\subsection{Numerical simulation} \label{ex:sim}

In the simulation, the input constraints are set to $u_{1,2} \in [-0.5, 0.5]$ and the state 
constraints are $\underline x_4 = -0.3$ and $\overline x_1 = 0.45$. The weights in the cost 
function are $Q'=100$, $R = 1$, $V' =0.1$, and the time discretization is $h=2^{-7}$. The MPC 
prediction horizon is set to $N=30$. For simplicity, the physical parameters of the beam are set to 
$1$. The initial conditions are given by $x_1 =x_4 = 0, x_2=\sin(\frac{\pi}{2}\xi)$ and $x_3 = 
\cos(\frac{\pi}{2}\xi)$, and we initialize $u_{-1}= 0$.

In order to demonstrate a somewhat realistic scenario, where the prediction model is not a 
perfect copy of the system dynamics, we employ different approximations for the plant and 
the prediction model. For the plant, we approximate the beam model by finite differences, and for 
the prediction model we employ a spectral-Galerkin approximation. In the finite-difference 
approximation of the plant we use $127$ grid points, and in the spectral-Galerkin approximation 
we use $9$ polynomial basis functions. Thus, the prediction model is lower-dimensional by a 
factor of $14$, which emulates the difference between an infinite-dimensional plant a 
finite-dimensional prediction model. Moreover, the continuous-time nature of the plant is 
emulated by simulating the plant using the \texttt{ode45} solver in MATLAB as opposed to using a 
prescribed time-discretization in the simulation.

The simulation results are displayed in Figures \ref{fig:uJ2}--\ref{fig:ss2}. First, Figure 
\ref{fig:uJ2} 
displays the optimal controls solved using the \texttt{mpcActiveSetSolver} in MATLAB and the 
corresponding optimal costs. It can be seen that the controls satisfy the input constraints and that 
the optimal costs are (exponentially) decreasing during the first part of the simulation. However, 
the evolution of the optimal costs 
experiences some ripples towards the end, even if the long-term trend is still decreasing, likely due to the imperfections in the prediction model. 
Moreover, the cost function values are already very close to zero when the ripples occur.

\begin{figure}[htpb]
	\includegraphics[width=\linewidth]{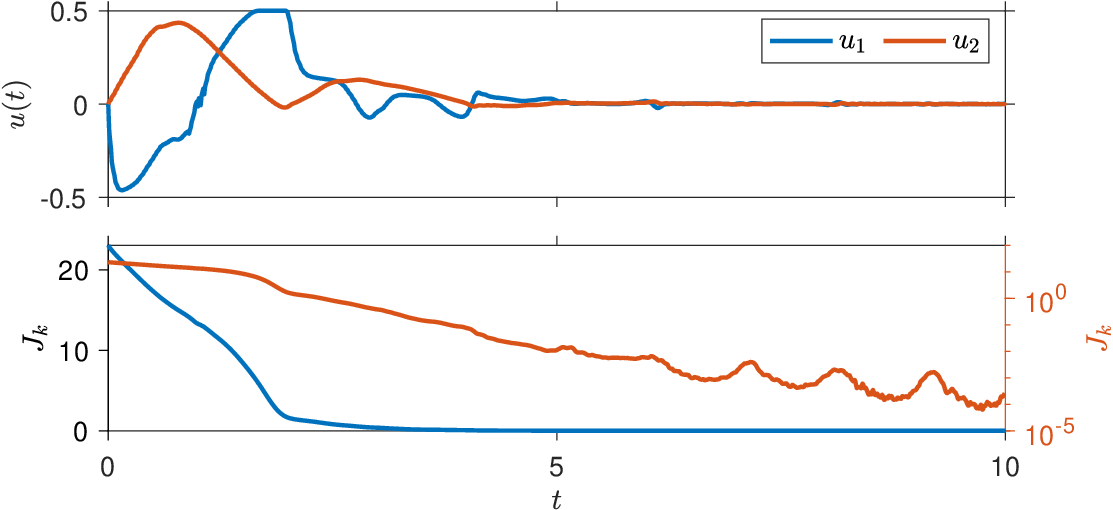}
	\caption{Controls and the optimal costs (logarithmic scale on the right).}
	\label{fig:uJ2}
\end{figure}

Figure \ref{fig:sn2} shows the mean values of the state components over the spatial interval 
along with the state constraints. While it appears that the state constraints become active at some 
point, a closer inspection of Figure \ref{fig:sn2} reveals that the constraint $\overline x_1$ in fact 
gets violated (very slightly) as the maximum value of mean$(x_1)$ is $3.78\cdot 10^{-4}$ larger 
than the upper bound $\overline x_1 = 0.45$. This violation of 
the state constraint is very mild, and we view the constraint as still being satisfied in the soft 
sense \cite[Sect. 6.3]{BMDP02}. We note that strict satisfaction of state constraints 
under 
imperfect prediction model would require robustness considerations which are outside the scope 
of this study. 
Finally, we note that $\underline x_4$ does not strictly get activated as the minimum value of 
mean$(x_4)$ is $7.61\cdot 10^{-4}$ larger than the lower bound $\underline x_4 = -0.3$.

For additional illustration, we also present the state profile of the plant component $x_3$ in 
Figure \ref{fig:ss2}, where the effect of an imperfect prediction model can also be seen. 
Somewhat similar to the cost function values in Figure \ref{fig:uJ2}, small ripples appear in the 
state profile as the value approaches zero, albeit these may also be caused by numerical noise. 
Interestingly enough, these ripples did not appear in 
Figure \ref{fig:sn2}---possibly due to the averaging effect of the mean value. Regardless, the 
state 
component seems to remain in some $\delta$-neighborhood of zero once it enters there, which 
can viewed as a version of practical stability \cite[Def. 2.15]{GrPaBook}.

\begin{figure}[!htpb]
	\includegraphics[width=\linewidth]{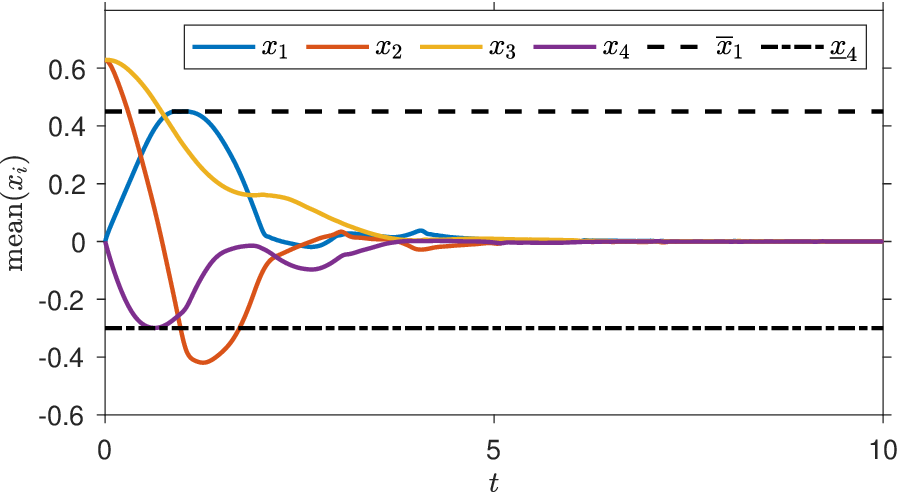}
	\caption{Means of the plant state components and the state constraints.}
	\label{fig:sn2}
\end{figure}

\begin{figure}[htpb]
	\includegraphics[width=\linewidth]{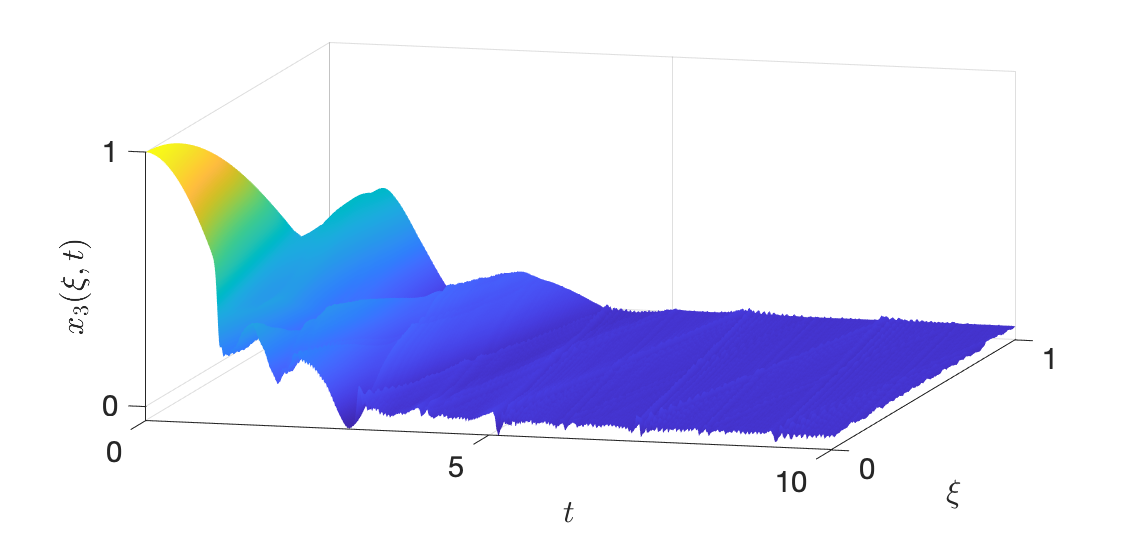}
	\caption{Profile of the plant state component $x_3 = \phi'$.}
		\label{fig:ss2}
\end{figure}

To conclude this section, we briefly comment on the choice of the prediction horizon $N$. In 
Table \ref{tab:ncomp}, we compare the computational times and optimal costs of the 
\texttt{mpcActiveSetSolver} for different horizon lengths. It can be seen that the algorithm is very 
efficient, even if the computational time increases along with the horizon length, arguably due to 
the exponentially growing number of candidate sets shown in the rightmost column of the table. 
Regardless, even the longest tested horizon $70$ is easily feasible for the ten-second simulation 
run. One explanatory factor in the rapid performance of the algorithm may be that we use the 
optimal active set of the previous control step as the initial guess for the next one, and if this set 
does not change much between steps---which often seems to be the case in this simulation---the 
algorithm finds an optimal active set already after a few iterations.

\begin{table}[htpb]
	\begin{center}
		\begin{tabular}{c|c|c|c}
			$N$ & time (s) &  $J_d$ & $2^{\widetilde p}$\\
			\hline &&& \\ [-7pt]
			10 & 0.34  & {148} & $2.88\cdot10^{17}$\\
			20 & 0.54  & {126} & $3.32\cdot10^{35}$\\
			30 & 0.84  & {122} & $2.83\cdot10^{53}$\\
			40 & 1.34 & {121} & $4.41\cdot10^{71}$\\
			50 & 2.20  & {120} & $5.06\cdot10^{89}$ \\
			60 & 3.38  & {119} & $5.87\cdot10^{107}$ \\
			70 & 5.34 & {119} & $6.77\cdot10^{125} $
		\end{tabular}
	\end{center}
	\caption{Comparison of computational times and control costs for different prediction horizon 
		lengths $N$.}
	\label{tab:ncomp}
\end{table}

Table \ref{tab:ncomp} additionally shows that the optimal cost decreases with the optimization horizon length, as expected. However, the optimal control costs $J_d$ do not noticeably change after the shortest horizon $N=10$, nor would Figures \ref{fig:uJ2}--\ref{fig:ss2}. Moreover, with longer prediction horizons, the imperfections may 
accumulate, and hence the late-horizon predictions may become unreliable. Hence, one might 
want to prioritize the early-horizon predictions with time-varying weights in the cost functional, or 
simply employ a shorter prediction horizon as we did in the simulation.

\section{Acknowledgments}

The authors gratefully acknowledge the input of the audience at the conference MTNS 2022, and Profs. Martin Mönnigmann, Mark Cannon, Eric Kerrigan \nocite{BCK16} and Tor Arne Johansen, for enlightening discussions during the preparation of this manuscript. An anonymous referee pointed out reference \cite{TaTr94}, leading to a major improvement of Thm \ref{thm:affine}.

%%%%%%%%%%%%%%%%%%%%%%%%%%%%%%%%%%%%%%%%%%%%%%%%%%

\ifCLASSOPTIONcaptionsoff
\newpage
\fi

%\bibliographystyle{IEEEbib}
%\bibliography{IEEEabrv,cont}

\def\cprime{$'$} \def\cprime{$'$}

\end{document}